\documentclass[10pt]{amsart}
\usepackage[top=3.5cm,bottom=3.5cm,left=4cm,right=4cm]{geometry}               
\usepackage{amsmath,amscd,amssymb,amsthm}
\usepackage[english]{babel}
\usepackage{mathtools}
\usepackage{enumerate}
\usepackage{setspace}
\usepackage{mathrsfs}
\usepackage{xcolor}
\usepackage[numbers]{natbib}
\usepackage[hidelinks]{hyperref}

\DeclareMathOperator{\Div}{Div}

\def\vF{\mathbb{F}}

\def\vZ{\mathbb{Z}}

\def\vP{\mathbb{P}}

\def\cN{\mathcal{N}}
\def\cO{\mathcal{O}}
\def\cS{\mathcal{S}}

\def\cD{\mathcal{D}}
\def\cE{\mathcal{E}}

\newtheorem{theorem}{Theorem}
\newtheorem{lemma}[theorem]{Lemma}

\newtheorem{proposition}[theorem]{Proposition}
\theoremstyle{definition}
\newtheorem{definition}[theorem]{Definition}

\makeindex
\thanks{The second author was partially supported by Swiss National Science Foundation grant number 149716 and \emph{Armasuisse}}

\onehalfspacing{} 

 \author[E. Dotti]{Edoardo Dotti}
 \address{Institute of Mathematics\\ 
 University of Zurich\\
 Winterthurerstrasse 190\\
 8057 Zurich, Switzerland\\
 }
 \email{edoardo.dotti@uzh.ch}

 \author[G. Micheli]{Giacomo Micheli}
 \address{Institute of Mathematics\\ 
 University of Zurich\\
 Winterthurerstrasse 190\\
 8057 Zurich, Switzerland\\
 }
 \email{giacomo.micheli@math.uzh.ch}

\date{}
\title{Eisenstein polynomials over Function fields}
\begin{document}
\begin{abstract}
In this paper we compute the density of monic and non-monic Eisenstein polynomials of fixed degree having entries in an integrally closed subring of a function field over a finite field.
\end{abstract}
\maketitle
\smallskip
\noindent \textbf{Keywords}: Function fields, Density, Polynomials, Riemann-Roch spaces. \\
\smallskip
\noindent \textbf{MSC}: 11R58, 11T06 
\section{Introduction}
%Let us start with the classical definition of natural density one has for a subset of $\mathbb Z^n$. 
%\begin{definition}
%Let $\mathcal A\subseteq \vZ^d$. Let
%\begin{itemize}
%\item $\overline{\mathbb{D}}(\mathcal A)=\limsup_{B\rightarrow \infty} \frac{|\mathcal A\cap [-B,B[^d|}{(2B)^d}$
%\item $\underline{\mathbb{D}}(\mathcal A)=\liminf_{B\rightarrow \infty} \frac{|\mathcal A\cap [-B,B[^d|}{(2B)^d}$
%\end{itemize}
%We say that $\mathcal A$ has density $\vD(A)$ whenever $\overline{\mathbb{D}}(\mathcal A)=\underline{\mathbb{D}}(\mathcal A)$.
%\end{definition}
%
%. In particular it is interesting to study densities of subsets which have remarkable arithmetic properties:
%in what follows we will be interested in irreducibility  

Let us start with the definition of \emph{Eisenstein polynomial} and \emph{natural density}
\begin{definition}
Let $R$ be an integral domain.
A polynomial $f(X)=\sum^{n}_{i=0} a_i x^i \in R[X]$ is said to be Eisenstein if there exists a prime ideal $\mathfrak{p}\subseteq R$ for which 
\begin{itemize}
\item $a_i\in \mathfrak{p}$ for all $i\in \{0,\dots,n-1\}$,
\item $a_0\notin \mathfrak{p}^2$, 
\item $a_n\notin \mathfrak{p}$.
\end{itemize}
\end{definition}
\begin{definition}
A subset $A$ of $\vZ^n$ is said to have \emph{density} $a$ if \[a=\lim_{B\rightarrow \infty} \frac{|A\cap [-B,B[^n|}{(2B)^n}.\]
\end{definition}
A classical result from the literature is that any Eisenstein polynomial is irreducible. In addition, observe that any polynomial of degree at most $d$ and coefficients over $\vZ$ can be regarded as an element of $\vZ^{d+1}$, while any monic polynomial of degree $d$ can be regarded as an element of $\vZ^d$.
Recently, it has been of interest the explicit computation of the natural density of both degree $d$ Eisenstein polynomials and monic Eisenstein polynomials over $\vZ$, see for example \cite{heyman2013number,dubickas2003polynomials}.

%More in general the study of natural densities goes back to Cesaro \cite{}, more recently \cite{,,,,}

As was first proved by Dubickas in \cite{dubickas2003polynomials}, the natural density of monic Eisenstein polynomials over $\vZ$ of fixed degree $d$ is
\begin{equation}\label{eq:rational_eis}
\prod_{p\:\text{prime}} \left(1- \frac{p-1}{p^{d+1}}\right).
\end{equation}
Heyman and Shparlinski extended the results of Dubickas to general Eisenstein polynomials and computed the error term of the density \cite[Theorem 1, Theorem 2]{heyman2013number}.

In this paper we would like to establish a function field  analogue of these results that  will include all the cases in which $R$ is selected as an integrally closed subring of a function field of a curve over a finite field.

The general case that we will analyse needs an appropriate definition of density which makes use of Moore-Smith convergence for directed sets, as described in \cite[]{micheli2014density}. For the moment, let us fix the notation for the basic structures we are are going to deal with, which is essentially the same as in \citep{bib:stichtenoth2009algebraic}. 

Let $q$ be a prime power and $\vF_q$ be the finite field of order $q$. 
Let $F$ be a function field having full constant field $\vF_q$.
Let $\vP_F$ be the set of places of $F$ and $\cS$ a non empty proper subset of $\vP_F$. Let us denote by $\cO_P$ the valuation ring at a place $P$ of $F$.
Let $H=\bigcap_{P\in \cS} \cO_P$ be the holomorphy ring associated to $\cS$ \citep[Definition 3.2.2]{bib:stichtenoth2009algebraic}. As it is well known, $H$ is a Dedekind Domain therefore any prime ideal is also maximal. In addition the maximal ideals of $H$ correspond exactly to the places in $\cS$  see \cite[Proposition 3.2.9]{bib:stichtenoth2009algebraic}. Therefore, if $P$ is a place of $F$ which lies in $\cS$ there exists a unique maximal ideal $P_H\subseteq H$ corresponding to $P$ for which $P\cap H=P_H$. In order not to heavier the notation, we will denote $P_H$ again by $P$.
Let $\mathcal{D}$ be the set of positive divisors of $\Div(F)$ having support outside the holomorphy set $\mathcal S$. It is easy to observe
\[H=\bigcup_{D\in\cD}\mathcal{L}(D)\]
and that $\cD$ is a directed set.

Let now $ A\subseteq H^m$, we define the \emph{upper} and \emph{lower density} of $A$ as
\[\overline {\mathbb D}(A)=
\limsup_{D\in \mathcal D} \frac{|A\cap \mathcal L(D)^m|}{q^{m\ell(D)}},\]
\[\underline {\mathbb D}(A)=
\liminf_{D\in \mathcal D} \frac{| A\cap \mathcal L(D)^m|}{q^{m\ell(D)}}\]
where the limit is defined using Moore-Smith convergence over the directed set $\mathcal{\cD}$ (see \citep[Chapter 2]{bib:kelley1955general}).
The \emph{density} of $ A$ is then defined if $\underline {\mathbb D}({A})=\overline {\mathbb D}({A})=:\mathbb D({A})$.

As already observed in \cite[]{micheli2014density}  , if we specialize our definition of density to the case of the univariate polynomial ring over a finite field we get the usual definition of density for $\mathbb{F}_q[x]$, see for example in \cite{bib:guo2013probability, bib:sugita2007probability}.

In addition, the final formulas for the density we get are analogous to the ones over the rational integers obtained in \cite{dubickas2003polynomials,heyman2013number}. 

The paper is structured as follows: in the next subsection we specify the notation we are going to use for the rest of the paper, in section \ref{monic} we compute the density of monic Eisenstein polynomials, in section \ref{nonmonic} we apply a similar strategy to compute the density of general Eisenstein polynomials.

\subsection{Notation}

Throughout this paper,
when $Y$ is a set and $m$ is a positive integer, we will denote by $Y^m$ the cartesian product of $m$-copies of $Y$.
To avoid confusion, the square  of an ideal $Q$ will then be denoted by $\widehat{Q}=Q\cdot Q$.  Furthermore notice that in the whole paper we consider polynomials of degree $d>1$. To easier the notation, we fix an enumeration $\{Q_1,Q_2,\dots,Q_i,\dots\}$  of the places of $\cS$. 
Since we will deal with the density of both monic and non-monic Eisenstein polynomials, we have to distinguish the notation, which we clarify in the following two paragraphs. 

\subsubsection*{Notation for monic Eisenstein polynomials:}  with a small abuse of notation we identify $H^d$ with the set of all monic polynomials of degree $d$ having entries over $H$. In particular, if $(h_0,\dots,h_{d-1})\in H^d$ then $h_i$ denotes the coefficient of the monomial of degree $i$. 
Furthermore, we denote by $\cE\subset H^d$ the set of monic Eisenstein polynomials of degree $d$ and by $\cN$ its complement in $H^d$. 
We denote by $\cE_i$ the set of monic polynomials which are Eisenstein with respect to $Q_i$:
\[\cE_i=\{(h_0,\dots, h_{d-1})\in H^d: \quad h_i\in Q_i \; \forall i\in \{0,\dots d-1\}\;\text{and}\; h_0\notin \widehat Q_i \}.\]
We denote by $\cN_i$ the complement of $\cE_i$.

\subsubsection*{Notation for Eisenstein polynomials:}  Analogously, we identify the set of all polynomials of degree $d$ having entries over $H$ with $H^{d+1}$. 
Let  $\cE^+\subseteq H^{d+1}$ be the set of Eisenstein polynomials of degree $d$ and $\cN^+$ be its complement in $H^{d+1}$.
We denote by $\cE^+_i$ the set of polynomials which are  Eisenstein with respect to $Q_i$:
\[\cE^+_i=\{(h_0,\dots, h_{d})\in H^{d+1}: \quad h_i\in Q_i \; \forall i\in \{0,\dots d-1\},\; h_0\notin \widehat Q_i \;\text{and}\; h_d\notin Q_i\}.\]
We denote by $\cN^+_i$ the complement of $\cE^+_i$.

\section{Monic Eisenstein Polynomials}\label{monic}

In this section we compute the density of monic Eisenstein polynomials via approximating the complement of $\mathcal{E}$ (i.e. $\mathcal{N}$) with $\overline{\cN_t}=\bigcap_{i=1}^t \cN_i$. 
First we show that we can explicitly compute the density of $\overline{\cN_t}$  (Proposition \ref{approx}). Then, we give a criterion to check whether the approximation is ``sharp'': i.e. whether the limit of the densities of $\overline{\cN_t}$ converges to the density of $\cN$ (Lemma \ref{magic_lemma}). Finally, we verify that the conditions under which the approximation is sharp are verified (Theorem \ref{maintheorem}).

\begin{proposition}\label{approx} The density of $\overline{\cN_t}$ is
 \[\mathbb{D}(\overline{\cN_t})=\prod_{i=1}^t \left(1-\frac{q^{\deg(Q_i)}-1}{q^{(d+1)\deg(Q_i)}}\right).\]
\end{proposition}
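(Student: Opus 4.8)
The plan is to compute $\mathbb{D}(\overline{\cN_t})$ by first understanding the density of a single $\cE_i$ and then using an inclusion–exclusion / independence argument over the finitely many places $Q_1,\dots,Q_t$. The key observation is that the Eisenstein conditions at distinct places involve congruences modulo coprime ideals, so by the Chinese Remainder Theorem they should behave independently.

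First I would fix a single place $Q=Q_i$ of degree $\delta=\deg(Q)$ and compute the density of $\cE_i$ inside $H^d$. The conditions defining $\cE_i$ are: $h_0,\dots,h_{d-1}$ all lie in $Q$, and $h_0\notin\widehat{Q}=Q^2$. For a divisor $D\in\cD$ with support disjoint from $\cS$, I want to count tuples in $\mathcal L(D)^d$ satisfying these conditions. The natural tool is to look at the reduction map $\mathcal L(D)\to H/Q\cong \vF_{q^\delta}$ and $\mathcal L(D)\to H/Q^2$; for $D$ large enough (cofinally in $\cD$) these maps are surjective with all fibers of equal size, so the proportion of elements of $\mathcal L(D)$ lying in $Q$ is $q^{-\delta}$, and among those, the proportion lying in $Q^2$ is again $q^{-\delta}$. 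Hence the density of $\cE_i$ is $q^{-(d-1)\delta}\cdot q^{-\delta}(1-q^{-\delta}) = q^{-d\delta}(1-q^{-\delta})$, so $\mathbb{D}(\cN_i) = 1 - q^{-d\delta}(1-q^{-\delta}) = 1 - \frac{q^\delta-1}{q^{(d+1)\delta}}$, matching the stated factor.

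Next, for the intersection $\overline{\cN_t}=\bigcap_{i=1}^t\cN_i$, equivalently $H^d\setminus\bigcup_{i=1}^t\cE_i$, I would argue that the events ``Eisenstein at $Q_i$'' for $i=1,\dots,t$ are jointly independent in the density sense. Concretely, the ideals $Q_1,\dots,Q_t,Q_1^2,\dots,Q_t^2$ generate pairwise coprime conditions, so by CRT the reduction map from $H$ (and hence from $\mathcal L(D)$, for $D$ cofinally large with support disjoint from $\cS\cup\{Q_1,\dots,Q_t\}$) onto the product $\prod_{i=1}^t H/Q_i^2$ is surjective with equidistributed fibers. Therefore the density of $\bigcap_{i=1}^t\cN_i$ factors as $\prod_{i=1}^t \mathbb{D}(\cN_i)$, which gives the claimed product formula. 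A clean way to package this is to note that $\overline{\cN_t}$ is a union of fibers of the reduction map $H^d\to \big(\prod_{i=1}^t H/Q_i^2\big)^d$, count the admissible residue tuples directly as $\prod_{i=1}^t\big(q^{2d\delta_i} - (q^{\delta_i}-1)q^{(d-1)\delta_i}\big)$ with $\delta_i=\deg Q_i$, and divide by the total $\prod_i q^{2d\delta_i}$.

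The main obstacle is the justification that along the directed set $\cD$ the reduction maps $\mathcal L(D)\to \prod_i H/Q_i^2$ are eventually (cofinally) surjective with fibers of constant size $q^{\ell(D)}/\prod_i q^{2\delta_i}$; this is where Riemann–Roch enters. For $\deg D$ large enough relative to the genus and to $\sum_i 2\delta_i$, one has $\ell(D - \sum_i 2(Q_i)) = \ell(D) - \sum_i 2\delta_i$ (both sides being in the range where Riemann–Roch is an equality), and more generally the relevant subspaces $\mathcal L(D - E)$ for the various effective $E$ supported on $\{Q_1,\dots,Q_t\}$ have the expected codimension; a short linear-algebra argument then shows the fibers of the reduction map all have the same cardinality. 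Since such $D$ are cofinal in $\cD$, the Moore–Smith limit is computed along them, and the density exists and equals the stated product. I would state this equidistribution as a small lemma (or cite the analogous statement from \cite{micheli2014density}) and then the proposition follows by the counting in the previous paragraph.
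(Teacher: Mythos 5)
Your proposal is correct and follows essentially the same route as the paper: reduce $\mathcal L(D)^d$ modulo $\widehat Q_1\cdots \widehat Q_t$, use CRT to split the target, use Riemann's theorem to get surjectivity with equal-sized fibers for $\deg D$ large (an eventual, hence valid, condition for the Moore--Smith limit), and count the admissible residue classes exactly as in your final "clean packaging" paragraph. The only cosmetic difference is that you first compute the single-place density and then invoke independence, whereas the paper performs the joint count directly.
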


\begin{proof}
Consider the map \[\tilde{\phi}:H^d\rightarrow \left(H / ( \widehat Q_1\cdots \widehat Q_t)\right)^d,\] which is defined componentwise by the reduction modulo the ideal $(\widehat Q_1\cdots \widehat Q_t)$. Observe also that $\left(H / ( \widehat Q_1\cdots \widehat Q_t)\right)^d\simeq\prod_{i=1}^t \left(H / \widehat Q_i\right)^d $ by the Chinese Remainder Theorem.\\
Consider now a divisor $D\in\mathcal{D}$. In order to compute the density of $\overline{\cN_t}$ it is enough to count how many elements there are in $\overline{\cN_t}\cap\mathcal{L}(D)^d$, when the degree of $D$ is large.

We start by showing that $\mathcal{L}(D)^d$ maps surjectively onto $\left(H / ( \widehat Q_1\cdots \widehat Q_t)\right)^d$ when the degree of $D$ is large enough.\\
For this consider the $\mathbb{F}_q$ linear map $\phi :\mathcal{L}(D)\rightarrow \left(H / ( \widehat Q_1\cdots \widehat Q_t)\right)$. We have $\ker(\phi)=\mathcal{L}(D)\cap  ( \widehat Q_1\cdots \widehat Q_t)$, which represents the elements of $\mathcal{L}(D)$ having at least a double root at each $Q_i$. Hence $\ker(\phi)=\mathcal{L}(D-2\sum_{i=1}^t Q_i)$. \\
By Riemann's theorem \citep[Theorem 1.4.17]{bib:stichtenoth2009algebraic}, if the degree of $D$ is large enough, the dimension of the kernel as an $\mathbb{F}_q$ vector space is
\begin{equation}\label{stralusch}
  \ell\left( D-2\sum_{i=1}^t Q_i\right)=\deg\left(D-2\sum_{i=1}^t Q_i\right)+1-g=\deg(D)-2\sum_{i=1}^t\deg(Q_i)+1-g,
\end{equation} 
  where $g$ denotes the genus of the function field.\\
By the same theorem $\ell(D)=\deg(D)+1-g$. Hence we obtain \[\dim_{\mathbb{F}_q}\left(\mathcal{L}(D) / \ker(\phi)\right)=\ell(D)-\ell\left(D-2\sum_{i=1}^t Q_i \right)=2\sum_{i=1}^t\deg(Q_i).\]
On the other hand, by the Chinese Remainder Theorem \[\dim_{\mathbb{F}_q}\left(H / ( \widehat Q_1\cdots \widehat Q_t)\right) \overset{CRT}{=} \dim_{\mathbb{F}_q}\left(H/\widehat Q_1\times\cdots\times H/\widehat Q_t \right)=2\sum_{i=1}^t\deg(Q_i) .\] Therefore when the degree of $D$ is large enough $\phi$ is surjective, thus $\tilde{\phi}$ is surjective.

Let $\psi_i:\left(H / (\widehat Q_1\cdots \widehat Q_t)\right)^d\longrightarrow \left(H / \widehat Q_i\right)^d$.
We have the following situation: \[ \mathcal{L}(D)^d\overset{\tilde{\phi}}{\twoheadrightarrow}\left(H / ( \widehat Q_1\cdots\widehat Q_t)\right)^d\overset{\psi}{\rightarrow}\prod_{i=1}^t \left(H / \widehat Q_i\right)^d,\] where $\psi =(\psi_1,\dots ,\psi_t)$. Notice that the check for $f\in H^d$ not to be Eisenstein with respect to $Q_i$ can be performed by looking at the reduction modulo $\widehat Q_i$. Therefore $f\in \overline{\cN_t}\cap\mathcal{L}(D)^d $ if and only if $\psi_i\circ\tilde{\phi}(f)\notin \left( (Q_i/\widehat Q_i)\setminus\{0\}\right)\times\left( Q_i/\widehat Q_i \right)^{d-1}=:E_i$ for all $i\in \{1,\dots ,t\}$.

It follows that  $\overline{\cN_t}\cap\mathcal{L}(D)^d=\tilde{\phi}^{-1}\left(\psi^{-1}\left(\prod_{i=1}^t\left( (H/\widehat Q_i)^d\setminus E_i\right) \right) \right)\cap\mathcal{L}(D)^d$. Hence
\begin{align*}
 \ \mid\overline{\cN_t}\cap\mathcal{L}(D)^d\mid &= \mid\ker(\tilde{\phi})\mid\cdot\prod_{i=1}^t\mid\left(H/\widehat Q_i\right)^d\setminus E_i\mid\\ &=q^{d\left(\deg(D)-2\sum_{i=1}^t\deg(Q_i)+1-g\right)}\cdot\prod_{i=1}^t\mid\left(H/\widehat Q_i\right)^d\setminus E_i\mid,
\end{align*}   
    where the last equality follows from (\ref{stralusch}). Now it remains to compute
\begin{align*}    \mid\left(H/\widehat Q_i\right)^d\setminus E_i\mid &=q^{2d\deg(Q_i)}-\mid\left( (Q_i/\widehat Q_i)\setminus\{0\}\right)\times\left( Q_i/\widehat Q_i \right)^{d-1}\mid\\ &=q^{2d\deg(Q_i)}-\left(q^{\deg(Q_i)}-1\right)\cdot q^{(d-1)\deg(Q_i)}\\&=q^{2d\deg(Q_i)}\left(1-q^{-d\deg(Q_i)}+q^{-(d+1)\deg(Q_i)} \right).
\end{align*}

Therefore for $D$ of degree large enough \[ \frac{\mid \overline{\cN_t}\cap\mathcal{L}(D)^d\mid}{\mid\mathcal{L}(D)^d\mid}=\frac{q^{d\left(\deg(D)-2\sum_{i=1}^t\deg(Q_i)+1-g\right)}}{q^{d\left(\deg(D)+1-g\right)}}\cdot\prod_{i=1}^t q^{2d\deg(Q_i)}\left(1-q^{-d\deg(Q_i)}+q^{-(d+1)\deg(Q_i)} \right)\] \[=\prod_{i=1}^t \left(1-q^{-d\deg(Q_i)}+q^{-(d+1)\deg(Q_i)} \right)=\prod_{i=1}^t \left(1-\frac{q^{\deg(Q_i)}-1}{q^{(d+1)\deg(Q_i)}}\right). \]
Hence \[\mathbb{D}(\overline{\cN_t})=\underset{D\in\mathcal{D}}{\lim}\frac{\mid \overline{\cN_t}\cap\mathcal{L}(D)^d\mid}{\mid\mathcal{L}(D)^d\mid}=\prod_{i=1}^t \left(1-\frac{q^{\deg(Q_i)}-1}{q^{(d+1)\deg(Q_i)}}\right).\]
\end{proof}
\begin{lemma}\label{magic_lemma}
Let $n\in\mathbb{N}$, $A\subseteq H^n$. Let $\{A_t\}_{t\in\mathbb{N}}$ be a family of subsets of $H^n$ such that $A_{t+1}\subseteq A_t$ and $\bigcap_{t\in\mathbb{N}} A_t=A$. Assume also that $\mathbb{D}(A_t)$ exists for all $t$. If $\lim_{t\rightarrow\infty}\overline{\mathbb{D}}(A_t\setminus A)=0$, then $ \mathbb{D}(A)=\lim_{t\rightarrow\infty}\mathbb{D}(A_t)$.
\end{lemma}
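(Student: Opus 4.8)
The plan is to exploit the disjoint decomposition $A_t=A\sqcup(A_t\setminus A)$, which holds for every $t$ because $A=\bigcap_s A_s\subseteq A_t$, and to push it through the counting quotients. First I would fix $t\in\mathbb N$ and a divisor $D\in\mathcal D$ and record the exact identity
\[
\frac{|A_t\cap\mathcal L(D)^n|}{q^{n\ell(D)}}=\frac{|A\cap\mathcal L(D)^n|}{q^{n\ell(D)}}+\frac{|(A_t\setminus A)\cap\mathcal L(D)^n|}{q^{n\ell(D)}}.
\]
Since all three terms are nonnegative, this yields for every $D\in\mathcal D$ both the bound $\tfrac{|A\cap\mathcal L(D)^n|}{q^{n\ell(D)}}\le\tfrac{|A_t\cap\mathcal L(D)^n|}{q^{n\ell(D)}}$ and the identity $\tfrac{|A\cap\mathcal L(D)^n|}{q^{n\ell(D)}}=\tfrac{|A_t\cap\mathcal L(D)^n|}{q^{n\ell(D)}}-\tfrac{|(A_t\setminus A)\cap\mathcal L(D)^n|}{q^{n\ell(D)}}$.

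Next I would pass to the Moore--Smith limits over the directed set $\mathcal D$, using that $\limsup$ and $\liminf$ of bounded nets obey the same elementary inequalities as for sequences: $\limsup$ is monotone, and $\liminf(f-g)\ge\liminf f-\limsup g$ for bounded nets $f,g$. Applying $\limsup_{D\in\mathcal D}$ to the first bound gives $\overline{\mathbb D}(A)\le\overline{\mathbb D}(A_t)=\mathbb D(A_t)$, the last equality because $\mathbb D(A_t)$ exists by hypothesis. Applying $\liminf_{D\in\mathcal D}$ to the identity gives $\underline{\mathbb D}(A)\ge\underline{\mathbb D}(A_t)-\overline{\mathbb D}(A_t\setminus A)=\mathbb D(A_t)-\overline{\mathbb D}(A_t\setminus A)$; note that $\overline{\mathbb D}(A_t\setminus A)$ is automatically well defined as the $\limsup$ of a net taking values in $[0,1]$, so no density hypothesis on $A_t\setminus A$ is needed. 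Combining, for every $t\in\mathbb N$,
\[
\overline{\mathbb D}(A)\le\mathbb D(A_t)\le\underline{\mathbb D}(A)+\overline{\mathbb D}(A_t\setminus A).
\]

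Finally I would let $t\to\infty$. By hypothesis $\overline{\mathbb D}(A_t\setminus A)\to 0$, so taking $\liminf_t$ and $\limsup_t$ in the last display gives
\[
\overline{\mathbb D}(A)\le\liminf_{t\to\infty}\mathbb D(A_t)\le\limsup_{t\to\infty}\mathbb D(A_t)\le\underline{\mathbb D}(A).
\]
Since always $\underline{\mathbb D}(A)\le\overline{\mathbb D}(A)$, all these quantities coincide; in particular $\underline{\mathbb D}(A)=\overline{\mathbb D}(A)$, so $\mathbb D(A)$ exists, the sequence $\mathbb D(A_t)$ converges, and $\mathbb D(A)=\lim_{t\to\infty}\mathbb D(A_t)$. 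The only point that needs any care is the manipulation of $\limsup$ and $\liminf$ for nets rather than for sequences — in particular the inequality $\liminf(f-g)\ge\liminf f-\limsup g$ over the directed set $\mathcal D$ — but this is routine Moore--Smith calculus; everything else is the elementary inclusion--exclusion of the first step. Notice also that the argument never uses monotonicity of $t\mapsto\mathbb D(A_t)$: the convergence of $\mathbb D(A_t)$ falls out of the squeeze.
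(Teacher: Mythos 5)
Your proof is correct and follows essentially the same route as the paper: the disjoint decomposition $A_t=A\sqcup(A_t\setminus A)$, the net inequality $\liminf(f-g)\ge\liminf f-\limsup g$ to get $\mathbb{D}(A_t)-\overline{\mathbb{D}}(A_t\setminus A)\le\underline{\mathbb{D}}(A)$, the bound $\overline{\mathbb{D}}(A)\le\mathbb{D}(A_t)$ from $A\subseteq A_t$, and the squeeze as $t\to\infty$. The only (cosmetic) difference is that the paper secures the existence of $\lim_t\mathbb{D}(A_t)$ up front via monotonicity, whereas you extract convergence from the squeeze itself; both are fine.
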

\begin{proof}
We start from the equality $\mid A_t\cap\mathcal{L}(D)^n\mid=\mid A\cap\mathcal{L}(D)^n\mid+\mid (A_t\setminus A)\cap\mathcal{L}(D)^n\mid$, from which it follows

\begin{align*}
 \underset{D\in\mathcal{D}}{\liminf}\frac{\mid A\cap\mathcal{L}(D)^n\mid}{\mid\mathcal{L}(D)^n\mid} &=\underset{D\in\mathcal{D}}{\liminf} \left( \frac{\mid A_t\cap\mathcal{L}(D)^n\mid}{\mid\mathcal{L}(D)^n\mid}-\frac{\mid (A_t\setminus A)\cap\mathcal{L}(D)^n\mid}{\mid\mathcal{L}(D)^n\mid} \right)\\&\geq \underset{D\in\mathcal{D}}{\liminf} \frac{\mid A_t\cap\mathcal{L}(D)^n\mid}{\mid\mathcal{L}(D)^n\mid}-\underset{D\in\mathcal{D}}{\limsup} \frac{\mid (A_t\setminus A)\cap\mathcal{L}(D)^n\mid}{\mid\mathcal{L}(D)^n\mid}.
\end{align*}

It follows that $\underline{\mathbb{D}}(A_t)-\overline{\mathbb{D}}(A_t\setminus A)\leq \underline{\mathbb{D}}(A)$. Since ${\mathbb{D}}(A_t)$ exists for all $t$ we get \[\mathbb{D}(A_t)-\overline{\mathbb{D}}(A_t\setminus A)\leq \underline{\mathbb{D}}(A).\]

Now notice that $\underset{t\rightarrow\infty}{\lim}\mathbb{D}(A_t)$ exists since $\mathbb{D}(A_t)$ is decreasing and bounded from below. By taking the limit in $t$, the last expression then becomes  \[\underset{t\rightarrow\infty}{\lim}\mathbb{D}(A_t)-\underset{t\rightarrow\infty}{\lim}\overline{\mathbb{D}}(A_t\setminus A)\leq \underline{\mathbb{D}}(A).\] 
Since $\underset{t\rightarrow\infty}{\lim}\overline{\mathbb{D}}(A_t\setminus A)=0$ by assumption, it follows that $\underset{t\rightarrow\infty}{\lim}\mathbb{D}(A_t)\leq \underline{\mathbb{D}}(A)$.

On the other hand $A\subseteq A_t$ which implies $\overline{\mathbb{D}}(A)\leq\mathbb{D}(A_t)$. In particular $\overline{\mathbb{D}}(A)\leq\underset{t\rightarrow\infty}{\lim}\mathbb{D}(A_t)$.\
Combining all together we get \[\underset{t\rightarrow\infty}{\lim}\mathbb{D}(A_t)\leq\underline{\mathbb{D}}(A)\leq\overline{\mathbb{D}}(A)\leq \underset{t\rightarrow\infty}{\lim}\mathbb{D}(A_t)  ,\]
therefore the claim follows.
\end{proof}

\begin{theorem}\label{maintheorem}
The density of the set of monic Eisenstein polynomials with coefficients in $H$ is 
\[\mathbb{D}(\cE)=1-\prod_{Q\in \cS}\left(1-\frac{q^{\deg(Q)}-1}{q^{(d+1)\deg(Q)}}\right).\]
\end{theorem}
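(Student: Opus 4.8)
The plan is to apply Lemma~\ref{magic_lemma} with $n=d$, $A=\cN$ (the complement of $\cE$ in $H^d$), and $A_t=\overline{\cN_t}=\bigcap_{i=1}^t\cN_i$. The hypotheses are almost in place: we have $\overline{\cN_{t+1}}\subseteq\overline{\cN_t}$ by construction, $\bigcap_{t\in\vN}\overline{\cN_t}=\bigcap_{i\in\vN}\cN_i=\cN$ since a monic polynomial is Eisenstein iff it is Eisenstein with respect to some $Q_i$, and Proposition~\ref{approx} gives that $\mathbb{D}(\overline{\cN_t})$ exists and equals the displayed finite product. So the only thing to verify is the sharpness condition $\lim_{t\to\infty}\overline{\mathbb{D}}(\overline{\cN_t}\setminus\cN)=0$. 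Once that is established, Lemma~\ref{magic_lemma} yields $\mathbb{D}(\cN)=\lim_{t\to\infty}\prod_{i=1}^t(1-(q^{\deg Q_i}-1)/q^{(d+1)\deg Q_i})=\prod_{Q\in\cS}(1-(q^{\deg Q}-1)/q^{(d+1)\deg Q})$, and $\mathbb{D}(\cE)=1-\mathbb{D}(\cN)$ gives the theorem.

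For the key estimate, observe that $f\in\overline{\cN_t}\setminus\cN$ means $f$ is not Eisenstein with respect to any of $Q_1,\dots,Q_t$ but \emph{is} Eisenstein with respect to some $Q_j$ with $j>t$; in particular such an $f$ lies in $\cE_j\subseteq\{(h_0,\dots,h_{d-1})\in H^d: h_i\in Q_j\ \forall i\}$ for some $j>t$, so $\overline{\cN_t}\setminus\cN\subseteq\bigcup_{j>t}\cE_j$. Hence it suffices to bound $\overline{\mathbb{D}}\bigl(\bigcup_{j>t}\cE_j\bigr)$ and show it tends to $0$ as $t\to\infty$. I would do this by a direct counting argument analogous to the one in Proposition~\ref{approx}: for a divisor $D$ of large degree and a fixed $j$, the condition $h_i\in Q_j$ for all $i$ cuts out $\mathcal{L}(D-Q_j)^d$ inside $\mathcal{L}(D)^d$ (using that $\mathcal{L}(D)\to H/Q_j$ is surjective for $\deg D$ large, by Riemann's theorem as in the Proposition), so $|\cE_j\cap\mathcal{L}(D)^d|\le q^{d(\ell(D)-\deg Q_j)}$, i.e. the relative density is at most $q^{-d\deg Q_j}$. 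A union bound then gives, for $\deg D$ large, $|(\bigcup_{j>t}\cE_j)\cap\mathcal{L}(D)^d|/q^{d\ell(D)}\le\sum_{j>t}q^{-d\deg Q_j}$; the right-hand side is independent of $D$, so $\overline{\mathbb{D}}\bigl(\bigcup_{j>t}\cE_j\bigr)\le\sum_{j>t}q^{-d\deg Q_j}$.

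The main obstacle is thus reduced to showing $\sum_{j}q^{-d\deg Q_j}<\infty$, i.e. that the tail $\sum_{j>t}q^{-d\deg Q_j}\to 0$; this convergence is what makes the approximation sharp, and it is exactly here that $d>1$ is used. This follows from the standard fact that the number of places of $F$ of a given degree $n$ is at most the number of effective divisors of degree $n$, which grows like $O(q^n)$ (it is the coefficient of $t^n$ in the rational zeta function $Z_F(t)$, whose only pole on $|t|=q^{-1}$ controls the growth), so $\sum_{Q\in\vP_F}q^{-d\deg Q}\le\sum_{n\ge1}(\#\{\text{places of degree }n\})\,q^{-dn}=O\!\left(\sum_{n\ge1}q^{n}q^{-dn}\right)$, which converges since $d\ge 2$. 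Restricting the sum to $\cS\subseteq\vP_F$ only makes it smaller, so the tail goes to $0$. One should be a little careful to make the ``$\deg D$ large enough'' uniform across the finitely many $j\le$ some bound while simultaneously controlling the infinite tail; the clean way is to fix $t$, note only finitely many surjectivity/Riemann conditions are invoked for a union bound over $j$ up to any finite cutoff, and then let the cutoff $\to\infty$ in the (already $D$-free) geometric tail bound. With the tail bound in hand, $\lim_{t\to\infty}\overline{\mathbb{D}}(\overline{\cN_t}\setminus\cN)=0$, and the theorem follows as described above.
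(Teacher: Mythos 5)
Your overall strategy is exactly the paper's: apply Lemma~\ref{magic_lemma} to $A_t=\overline{\cN_t}$, $A=\cN$, use the inclusion $\overline{\cN_t}\setminus\cN=\bigcup_{j>t}\cE_j\subseteq\bigcup_{j>t}Q_j^d$, a union bound, and the convergence of $\sum_j q^{-d\deg Q_j}$ via the zeta function (which is indeed where $d>1$ enters). The one step that does not hold as written is the per-term bound $|\cE_j\cap\mathcal{L}(D)^d|/q^{d\ell(D)}\le q^{-d\deg Q_j}$. That bound requires $\ell(D)-\ell(D-Q_j)=\deg Q_j$, i.e.\ surjectivity of $\mathcal{L}(D)\to H/Q_j$; the inequality that holds in general is $\ell(D)-\ell(D-Q_j)\le\deg Q_j$, which is the wrong direction, and surjectivity genuinely fails for places $Q_j$ with $\deg Q_j$ close to $\deg D$ (one needs roughly $\deg(D-Q_j)\ge 2g-1$). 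Since for each fixed $D$ the union runs over all $j>t$ with $\deg Q_j\le\deg D$, these large-degree places are always present, and your proposed repair (a fixed finite cutoff $T$ plus a ``$D$-free'' tail) is circular: bounding $\overline{\mathbb{D}}\bigl(\bigcup_{j>T}\cE_j\bigr)$ is the same problem again, and no $D$-free tail bound is available until precisely those terms have been handled. The fix is what the paper does: bound the numerator unconditionally by $\ell(D-Q_j)\le\deg(D-Q_j)+1$ and the denominator from below by Riemann's theorem $\ell(D)\ge\deg(D)+1-g$, yielding $q^{d\ell(D-Q_j)}/q^{d\ell(D)}\le q^{d(g-\deg Q_j)}$ for every contributing $j$; the resulting uniform extra factor $q^{dg}$ is harmless for the convergence of the tail, and the rest of your argument then goes through verbatim.
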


\begin{proof}

We make use of Lemma \ref{magic_lemma} for the family $\{\overline{\cN_t}\}_{t\in\mathbb{N}}$. Hence we want to show that $ \lim_{t\rightarrow\infty}\overline{\mathbb{D}}(\overline{\cN_t}\setminus \cN)=0$.\\
First note that
\begin{itemize}
\item $\overline{\cN_t}\setminus\cN =\bigcup_{r>t}\cE_r\subseteq \bigcup_{r>t} Q_r^d$ ,
\item $Q_r^d\cap\mathcal{L}(D)^d=\mathcal{L}(D-Q_r)=0$, if $\deg(D)-\deg(Q_r)<0$.
\end{itemize}

Now we get  \[ \overline{\mathbb{D}}(\overline{\cN_t}\setminus \cN)=\underset{D\in\mathcal{D}}{\limsup}\frac{\mid (\overline{\cN_t}\setminus\cN)\cap\mathcal{L}(D)^d\mid}{\mid\mathcal{L}(D)^d\mid}\leq\underset{D\in\mathcal{D}}{\limsup}\left\vert\bigcup_{\substack{r>t \\ \deg(Q_r)\leq\deg(D)}}Q_r^d\cap\mathcal{L}(D)^d\right\vert q^{-d\ell(D)}  \] 
\begin{equation}\label{cappu}
= \underset{D\in\mathcal{D}}{\limsup}\left\vert\bigcup_{\substack{r>t \\ \deg(Q_r)\leq\deg(D)}}\mathcal{L}(D-Q_r)^d\right\vert q^{-d\ell(D)}\leq\underset{D\in\mathcal{D}}{\limsup}\sum_{\substack{r>t \\ \deg(Q_r)\leq\deg(D)}}\frac{q^{d\ell(D-Q_r)}}{q^{d\ell(D)}}.
\end{equation}
Observe now that if $\deg(D-Q_r)\geq 0$ we have that $\ell(D-Q_r)\leq\deg(D-Q_r)+1$ \citep[Eq. 1.21]{bib:stichtenoth2009algebraic} and also that $\ell(D)\geq\deg(D)+1-g$ by Riemann's theorem.\\
Hence we have that (\ref{cappu}) is less or equal than \[ \underset{D\in\mathcal{D}}{\limsup}\sum_{\substack{r>t \\ \deg(Q_r)\leq\deg(D)}}\frac{q^{d(1+\deg(D)-\deg(Q_r))}}{q^{d(\deg(D)+1-g)}}\leq \sum_{r>t}q^{d(g-\deg(Q_r))}=q^{dg}\sum_{r>t}q^{-d\deg(Q_r)}. \]
We now notice that $\sum_{r>t}q^{-d\deg(Q_r)}$ is the tail of a subseries of the Zeta function, which is absolutely convergent for $d>1$. Letting $t$ going to infinity the tail converges to $0$, thus $\underset{t\rightarrow\infty}{\lim}\overline{\mathbb{D}}(\overline{\cN_t}\setminus \cN)=0$. We are now able to apply Lemma \ref{magic_lemma} with $n=d$, $A_t=\overline{\cN_t}$ and $A=\cN$\\
\[ \mathbb{D}(\cN)=\underset{t\rightarrow\infty}{\lim}\mathbb{D}(\overline{\cN_t})=\underset{t\rightarrow\infty}{\lim}\prod_{i=1}^t \left(1-\frac{q^{\deg(Q_i)}-1}{q^{(d+1)\deg(Q_i)}}\right)=\prod_{Q\in \cS}\left(1-\frac{q^{\deg(Q)}-1}{q^{(d+1)\deg(Q)}}\right).\]
We conclude by taking the complement \[ \mathbb{D}(\cE)=1-\mathbb{D}(\cN)=1-\prod_{Q\in \cS}\left(1-\frac{q^{\deg(Q)}-1}{q^{(d+1)\deg(Q)}}\right).\]
\end{proof}
\section{Non-Monic Eisenstein Polynomials}\label{nonmonic}

In this section we compute the density of Eisenstein polynomials applying the same strategy of section \ref{monic}. For this let $\overline{\cN_t}^{+}=\bigcap_{i=1}^t \cN_i^{+}$.

\begin{proposition} The density of $\overline{\cN_t}^{+}$ is
 \[\mathbb{D}(\overline{\cN_t}^{+})=\prod_{i=1}^t \left(1-\frac{(q^{\deg(Q_i)}-1)^2}{q^{(d+2)\deg(Q_i)}}\right).\]
\end{proposition}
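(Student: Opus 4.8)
The plan is to follow verbatim the strategy of Proposition \ref{approx}, replacing $H^d$ by $H^{d+1}$ and the "bad set" $E_i$ attached to each place $Q_i$ by the set encoding the three Eisenstein conditions for non-monic polynomials. Concretely, I would consider the componentwise reduction map $\tilde\phi\colon H^{d+1}\to \left(H/(\widehat Q_1\cdots\widehat Q_t)\right)^{d+1}$, which via the Chinese Remainder Theorem identifies the target with $\prod_{i=1}^t\left(H/\widehat Q_i\right)^{d+1}$, and fix a divisor $D\in\cD$. The key first step is to show that $\mathcal L(D)^{d+1}$ surjects onto $\left(H/(\widehat Q_1\cdots\widehat Q_t)\right)^{d+1}$ for $\deg(D)$ large: this is immediate from Proposition \ref{approx}, since the relevant $\mathbb F_q$-linear map is $\phi\colon\mathcal L(D)\to H/(\widehat Q_1\cdots\widehat Q_t)$ with kernel $\mathcal L(D-2\sum_{i=1}^tQ_i)$, and the dimension count $\ell(D)-\ell(D-2\sum Q_i)=2\sum\deg(Q_i)=\dim_{\mathbb F_q}\left(H/(\widehat Q_1\cdots\widehat Q_t)\right)$ is exactly the one already carried out there. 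In particular $|\ker\tilde\phi|=q^{(d+1)(\deg(D)-2\sum_{i=1}^t\deg(Q_i)+1-g)}$.

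Next I would identify, for a single place $Q=Q_i$ of degree $\delta=\deg(Q_i)$, the set $E_i^+\subseteq\left(H/\widehat Q_i\right)^{d+1}$ of tuples $(h_0,\dots,h_d)$ that \emph{are} Eisenstein with respect to $Q_i$: this is $\left((Q_i/\widehat Q_i)\setminus\{0\}\right)\times\left(Q_i/\widehat Q_i\right)^{d-1}\times\left((H/\widehat Q_i)\setminus(Q_i/\widehat Q_i)\right)$, i.e. $h_0\in Q_i\setminus\widehat Q_i$, then $h_1,\dots,h_{d-1}\in Q_i$, and finally $h_d\notin Q_i$. Since $|Q_i/\widehat Q_i|=q^\delta$, $|\widehat Q_i|=1$ in the residue ring $H/\widehat Q_i$ of size $q^{2\delta}$, and $|(H/\widehat Q_i)\setminus(Q_i/\widehat Q_i)|=q^{2\delta}-q^\delta=q^\delta(q^\delta-1)$, we get
\[
|E_i^+|=(q^\delta-1)\cdot q^{(d-1)\delta}\cdot q^\delta(q^\delta-1)=(q^\delta-1)^2 q^{d\delta},
\]
hence $|\left(H/\widehat Q_i\right)^{d+1}\setminus E_i^+|=q^{2(d+1)\delta}-(q^\delta-1)^2q^{d\delta}=q^{2(d+1)\delta}\left(1-\dfrac{(q^\delta-1)^2}{q^{(d+2)\delta}}\right)$. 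As in Proposition \ref{approx}, the Eisenstein check at $Q_i$ factors through the reduction mod $\widehat Q_i$, so $f\in\overline{\cN_t}^+\cap\mathcal L(D)^{d+1}$ iff $\psi_i\circ\tilde\phi(f)\notin E_i^+$ for all $i$, where $\psi=(\psi_1,\dots,\psi_t)\colon\left(H/(\widehat Q_1\cdots\widehat Q_t)\right)^{d+1}\to\prod_i\left(H/\widehat Q_i\right)^{d+1}$. Therefore
\[
\left|\overline{\cN_t}^+\cap\mathcal L(D)^{d+1}\right|=|\ker\tilde\phi|\cdot\prod_{i=1}^t\left|\left(H/\widehat Q_i\right)^{d+1}\setminus E_i^+\right|.
\]

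Finally I would divide by $|\mathcal L(D)^{d+1}|=q^{(d+1)\ell(D)}=q^{(d+1)(\deg(D)+1-g)}$; the factor $q^{(d+1)\deg(D)}$ and the genus term cancel exactly against $|\ker\tilde\phi|$ as in the monic case, leaving the $D$-independent quantity $\prod_{i=1}^t q^{-2(d+1)\delta_i}\cdot q^{2(d+1)\delta_i}\left(1-\dfrac{(q^{\delta_i}-1)^2}{q^{(d+2)\delta_i}}\right)=\prod_{i=1}^t\left(1-\dfrac{(q^{\deg(Q_i)}-1)^2}{q^{(d+2)\deg(Q_i)}}\right)$, and passing to the Moore-Smith limit over $\cD$ gives $\mathbb D(\overline{\cN_t}^+)$ with the asserted value. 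There is essentially no obstacle here beyond bookkeeping: the only point requiring care is that "$\deg(D)$ large enough" in the surjectivity step must be uniform enough that the cofinality of such $D$ in $\cD$ lets us take the limit — but this is precisely the argument already given in Proposition \ref{approx}, which we invoke unchanged with $d$ replaced by $d+1$ in the exponents coming from taking $(d+1)$-st Cartesian powers.
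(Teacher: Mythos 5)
Your proof is correct and follows essentially the same route as the paper's: the same reduction map $\tilde\phi$ modulo $(\widehat Q_1\cdots\widehat Q_t)$, the same surjectivity argument via Riemann's theorem (invoked from the monic case with $d$ replaced by $d+1$), the same identification and count of $E_i^+$, and the same cancellation when dividing by $q^{(d+1)\ell(D)}$. Nothing to add.
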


\begin{proof}
Consider a divisor $D\in\mathcal{D}$. With the same reasoning of the monic case one can show that $\mathcal{L}(D)^{d+1}$ maps surjectively onto $\left(H / ( \widehat Q_1\cdots \widehat Q_t)\right)^{d+1}$ when the degree of $D$ is large enough.

Let $\psi_i:\left(H / ( \widehat Q_1\cdots \widehat Q_t)\right)^{d+1}\longrightarrow \left(H / \widehat Q_i\right)^{d+1}$ as before.
The situation is now the following: \[ \mathcal{L}(D)^{d+1}\overset{\tilde{\phi}}{\twoheadrightarrow}\left(H / ( \widehat Q_1\cdots\widehat Q_t)\right)^{d+1}\overset{\psi}{\rightarrow}\prod_{i=1}^t \left(H / \widehat Q_i\right)^{d+1},\] where $\psi =(\psi_1,\dots ,\psi_t)$.\\
Analogously to the case of monic polynomials we note that we can verify that $f\in H$ is not Eisenstein with respect to $Q_i$ by looking at the reduction modulo $\widehat Q_i$. Hence $f\in \overline{\cN_t}^{+}\cap\mathcal{L}(D)^{d+1} $ if and only if $\psi_i\circ\tilde{\phi}(f)\notin \left( (Q_i/\widehat Q_i)\setminus\{0\}\right)\times\left( Q_i/\widehat Q_i \right)^{d-1}\times \left(  (H/\widehat Q_i)\setminus (Q_i/\widehat Q_i) \right)=:E_i^{+}$ for all $i\in \{1,\dots ,t\}$.\\
Hence we get \[\mid\overline{\cN_t}^{+}\cap\mathcal{L}(D)^{d+1}\mid= \mid\ker(\tilde{\phi})\mid\cdot\prod_{i=1}^t\mid\left(H/\widehat Q_i\right)^{d+1}\setminus E_i^{+}\mid\] \[=q^{(d+1)\left(\deg(D)-2\sum_{i=1}^t\deg(Q_i)+1-g\right)}\cdot\prod_{i=1}^t\mid\left(H/\widehat Q_i\right)^{d+1}\setminus E_i^{+}\mid,\] where

\begin{align*}
  \mid\left(H/\widehat Q_i\right)^{d+1}\setminus E_i^{+}\mid &= q^{2(d+1)\deg(Q_i)}-\mid \left( (Q_i/\widehat Q_i)\setminus\{0\}\right)\times\left( Q_i/\widehat Q_i \right)^{d-1}\times \left(  (H/\widehat Q_i)\setminus (Q_i/\widehat Q_i) \right)\mid 
\\&=q^{2(d+1)\deg(Q_i)}-\left( \left( q^{\deg(Q_i)}-1 \right)q^{(d-1)\deg(Q_i)}\left( q^{2\deg(Q_i)}-q^{\deg(Q_i)}\right) \right) 
%\[=q^{2(d+1)\deg(Q_i)} \left( 1-q^{-d\deg(Q_i)}+2q^{-(d+1)\deg(Q_i)}-q^{-(d+2)\deg(Q_i)}  \right)
\\&=q^{2(d+1)\deg(Q_i)} \left(1-\frac{q^{2\deg(Q_i)}-2q^{\deg(Q_i)}+1}{q^{(d+2)\deg(Q_i)}}  \right)  \\&= q^{2(d+1)\deg(Q_i)} \left(1-\frac{\left( q^{\deg(Q_i)}-1\right)^2}{q^{(d+2)\deg(Q_i)}}  \right).
\end{align*}

Therefore for $D$ of degree large enough
\begin{align*}  \frac{\mid \overline{\cN_t}^{+}\cap\mathcal{L}(D)^{d+1}\mid}{\mid\mathcal{L}(D)^{d+1}\mid} &=\frac{q^{(d+1)\left(\deg(D)-2\sum_{i=1}^t\deg(Q_i)+1-g\right)}}{q^{(d+1)\left(\deg(D)+1-g\right)}}\cdot\prod_{i=1}^t q^{2(d+1)\deg(Q_i)} \left(1-\frac{\left( q^{\deg(Q_i)}-1\right)^2}{q^{(d+2)\deg(Q_i)}}  \right) \\&=\prod_{i=1}^t  \left(1-\frac{\left( q^{\deg(Q_i)}-1\right)^2}{q^{(d+2)\deg(Q_i)}}  \right). 
\end{align*}
Hence \[\mathbb{D}(\overline{\cN_t}^{+})=\underset{D\in\mathcal{D}}{\lim}\frac{\mid \overline{\cN_t}^{+}\cap\mathcal{L}(D)^{d+1}\mid}{\mid\mathcal{L}(D)^{d+1}\mid}=\prod_{i=1}^t  \left(1-\frac{\left( q^{\deg(Q_i)}-1\right)^2}{q^{(d+2)\deg(Q_i)}}  \right).\]
\end{proof}
\begin{theorem}
The density of the set of Eisenstein polynomials with coefficients in $H$ is 
\[\mathbb{D}(\cE^+)=1-\prod_{Q\in \cS}\left(1-\frac{(q^{\deg(Q)}-1)^2}{q^{(d+2)\deg(Q)}}\right).\]
\end{theorem}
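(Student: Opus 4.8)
The plan is to mimic exactly the argument used to prove Theorem \ref{maintheorem}, now applied to the family $\{\overline{\cN_t}^{+}\}_{t\in\vN}$ of subsets of $H^{d+1}$. We already have $\mathbb{D}(\overline{\cN_t}^{+})$ computed by the preceding proposition, and the sets $\overline{\cN_t}^{+}=\bigcap_{i=1}^t\cN_i^{+}$ are nested decreasing with $\bigcap_{t\in\vN}\overline{\cN_t}^{+}=\cN^{+}$ (since a polynomial is non-Eisenstein exactly when it is non-Eisenstein with respect to every $Q_i\in\cS$). Thus the whole task reduces, via Lemma \ref{magic_lemma} with $n=d+1$, $A_t=\overline{\cN_t}^{+}$ and $A=\cN^{+}$, to showing $\lim_{t\to\infty}\overline{\mathbb{D}}(\overline{\cN_t}^{+}\setminus\cN^{+})=0$.

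To estimate $\overline{\mathbb{D}}(\overline{\cN_t}^{+}\setminus\cN^{+})$, I would first observe the inclusion
\[
\overline{\cN_t}^{+}\setminus\cN^{+}=\bigcup_{r>t}\cE_r^{+}\subseteq\bigcup_{r>t}Q_r^{d+1},
\]
because any $f\in\overline{\cN_t}^{+}\setminus\cN^{+}$ fails to be non-Eisenstein, hence is Eisenstein with respect to some $Q_r$, and that forces $r>t$ (as $f\in\overline{\cN_t}^{+}\subseteq\cN_r^{+}$ for $r\le t$), and an $\cE_r^{+}$-polynomial has all its $d+1$ coefficients $h_0,\dots,h_d$ lying in $Q_r$ — in fact it has the stronger property $h_0\notin\widehat Q_r$, $h_d\notin Q_r$, but the crude bound by $Q_r^{d+1}$ suffices. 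Then for $D\in\cD$ one has $Q_r^{d+1}\cap\mathcal{L}(D)^{d+1}=\mathcal{L}(D-Q_r)^{d+1}$, which is trivial whenever $\deg(D)<\deg(Q_r)$, so intersecting with $\mathcal{L}(D)^{d+1}$, applying a union bound, and using $\ell(D-Q_r)\le\deg(D-Q_r)+1$ together with $\ell(D)\ge\deg(D)+1-g$ (Riemann's theorem), one gets
\[
\overline{\mathbb{D}}(\overline{\cN_t}^{+}\setminus\cN^{+})\le\underset{D\in\cD}{\limsup}\sum_{\substack{r>t\\\deg(Q_r)\le\deg(D)}}\frac{q^{(d+1)\ell(D-Q_r)}}{q^{(d+1)\ell(D)}}\le q^{(d+1)g}\sum_{r>t}q^{-(d+1)\deg(Q_r)}.
\]

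The series $\sum_{r}q^{-(d+1)\deg(Q_r)}$ is a subseries of the one appearing in the Euler product for the zeta function of $F$, hence converges since $d+1>1$ (indeed $d>1$); therefore its tail over $r>t$ tends to $0$ as $t\to\infty$, giving $\lim_{t\to\infty}\overline{\mathbb{D}}(\overline{\cN_t}^{+}\setminus\cN^{+})=0$. Lemma \ref{magic_lemma} then yields
\[
\mathbb{D}(\cN^{+})=\lim_{t\to\infty}\mathbb{D}(\overline{\cN_t}^{+})=\lim_{t\to\infty}\prod_{i=1}^t\left(1-\frac{(q^{\deg(Q_i)}-1)^2}{q^{(d+2)\deg(Q_i)}}\right)=\prod_{Q\in\cS}\left(1-\frac{(q^{\deg(Q)}-1)^2}{q^{(d+2)\deg(Q)}}\right),
\]
and passing to the complement $\cE^{+}=H^{d+1}\setminus\cN^{+}$ gives the claimed formula. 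Since the structure is a verbatim transplant of the monic case with $d$ replaced by $d+1$ in the exponents and $Q_r^d$ replaced by $Q_r^{d+1}$, there is no real obstacle; the only point meriting a line of care is the bookkeeping that the convergence exponent is still $d+1>1$ so the zeta subseries tail argument goes through unchanged.
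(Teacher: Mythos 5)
There is a genuine error in your covering step. You claim $\cE_r^{+}\subseteq Q_r^{d+1}$, i.e.\ that all $d+1$ coefficients of a polynomial that is Eisenstein with respect to $Q_r$ lie in $Q_r$. This is false: by definition of $\cE_r^{+}$ the leading coefficient satisfies $h_d\notin Q_r$, so in fact $\cE_r^{+}\cap Q_r^{d+1}=\emptyset$ — the set you use to cover $\cE_r^{+}$ is disjoint from it. You even note the condition $h_d\notin Q_r$ yourself but misread it as a ``stronger property'' making $Q_r^{d+1}$ a ``crude bound''; it is the opposite, it is exactly what prevents the containment. Consequently the estimate $\overline{\mathbb{D}}(\overline{\cN_t}^{+}\setminus\cN^{+})\le q^{(d+1)g}\sum_{r>t}q^{-(d+1)\deg(Q_r)}$ is unjustified as derived.

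The correct containment, and the one the paper uses, is $\cE_r^{+}\cap\mathcal{L}(D)^{d+1}\subseteq\bigl(Q_r\cap\mathcal{L}(D)\bigr)^{d}\times\mathcal{L}(D)=\mathcal{L}(D-Q_r)^{d}\times\mathcal{L}(D)$: only the first $d$ coefficients are constrained to lie in $Q_r$, while the leading coefficient is only known to lie in $\mathcal{L}(D)$. Each term in the union bound is then $q^{d\ell(D-Q_r)}q^{\ell(D)}/q^{(d+1)\ell(D)}=q^{d\ell(D-Q_r)}/q^{d\ell(D)}$, which reduces the estimate to exactly the quantity (\ref{cappu}) already bounded in the monic case, giving $q^{dg}\sum_{r>t}q^{-d\deg(Q_r)}\to 0$ since $d>1$. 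So the theorem and the rest of your outline (the application of Lemma \ref{magic_lemma} and the passage to the complement) are fine once this step is repaired, but as written the exponent $d+1$ in your tail bound is an artifact of a false inclusion, not a legitimate strengthening.
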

\begin{proof}
Again by lemma \ref{magic_lemma} we have to show  that $ \lim_{t\rightarrow\infty}\overline{\mathbb{D}}(\overline{\cN_t}^{+}\setminus \cN^{+})=0$.\\
Observe that $\cE_r^{+}\cap\mathcal{L}(D)^{d+1}\subseteq Q_r^d\times \mathcal{L}(D)$.
We get 
\begin{align*}
 \overline{\mathbb{D}}(\overline{\cN_t}^{+}\setminus \cN^{+}) &=\underset{D\in\mathcal{D}}{\limsup}\frac{\mid (\overline{\cN_t}^{+}\setminus\cN^{+})\cap\mathcal{L}(D)^{d+1}\mid}{\mid\mathcal{L}(D)^{d+1}\mid} \\&\leq\underset{D\in\mathcal{D}}{\limsup}\left\vert\bigcup_{\substack{r>t \\ \deg(Q_r)\leq\deg(D)}}\cE_r^{+}\cap\mathcal{L}(D)^{d+1}\right\vert q^{-(d+1)\ell(D)}   
 \\&\leq\underset{D\in\mathcal{D}}{\limsup}\left\vert\bigcup_{\substack{r>t \\ \deg(Q_r)\leq\deg(D)}}\left( Q_r^d\times\mathcal{L}(D)\right)\cap\mathcal{L}(D)^{d+1}\right\vert q^{-(d+1)\ell(D)} 
\\&\leq\underset{D\in\mathcal{D}}{\limsup}\sum_{\substack{r>t \\ \deg(Q_r)\leq\deg(D)}}\frac{\mid\left( Q_r^d\times\mathcal{L}(D)\right)\cap\mathcal{L}(D)^{d+1}\mid}{q^{(d+1)\ell(D)}} \\&=\underset{D\in\mathcal{D}}{\limsup}\sum_{\substack{r>t \\ \deg(Q_r)\leq\deg(D)}}\frac{\mid Q_r\cap\mathcal{L}(D)\mid ^d\mid\mathcal{L}(D)\mid}{q^{(d+1)\ell(D)}}  
\\&=\underset{D\in\mathcal{D}}{\limsup}\sum_{\substack{r>t \\ \deg(Q_r)\leq\deg(D)}}\frac{\mid Q_r\cap\mathcal{L}(D)\mid ^d}{q^{d\ell(D)}}
\end{align*}
which is equation (\ref{cappu}).
Hence for $t$ going to infinity we obtain $\overline{\mathbb{D}}(\overline{\cN_t}^{+}\setminus \cN^{+})=0$.

We now apply Lemma \ref{magic_lemma} with $n=d+1$, $A_t=\overline{\cN_t}^{+}$ and $A=\cN^{+}$ obtaining
\[ \mathbb{D}(\cN^{+})=\underset{t\rightarrow\infty}{\lim}\mathbb{D}(\overline{\cN_t}^{+})=\underset{t\rightarrow\infty}{\lim}\prod_{i=1}^t  \left(1-\frac{\left( q^{\deg(Q_i)}-1\right)^2}{q^{(d+2)\deg(Q_i)}}  \right)=\prod_{Q\in \cS}  \left(1-\frac{\left( q^{\deg(Q)}-1\right)^2}{q^{(d+2)\deg(Q)}}  \right).\]  

We now take the complement
\[  \mathbb{D}(\cE^{+})=1-\mathbb{D}(\cN^{+})=1-\prod_{Q\in \cS}  \left(1-\frac{\left( q^{\deg(Q)}-1\right)^2}{q^{(d+2)\deg(Q)}}  \right).\]
\end{proof}

\bibliographystyle{plainnat}
\bibliography{biblio}{}

\end{document}